\newtheoremstyle{bracket}{1ex}{2ex}{\rm}{}{\bfseries}{}{0.8em}{\thmnumber{(#2)}}
\newtheoremstyle{thm}{1ex}{2ex}{\itshape}{}{\bfseries}{}{0.9em}{\thmnumber{(#2)}\thmname{ #1}\thmnote{ (#3)}}
\newtheoremstyle{example}{1ex}{2ex}{\rm}{}{\bfseries}{}{0.8em}{\thmnumber{(#2)}\thmname{ #1}}
\theoremstyle{bracket}
\newtheorem{no}{}
\theoremstyle{thm}
\newtheorem{prop}[no]{Proposition}
\theoremstyle{example}
\newtheorem{exa}[no]{Example}
\DeclareMathOperator{\ke}{Ker}
\newcommand{\grmod}{{\sf GrMod}}
\newcommand{\N}{\mathbbm{N}}
\newcommand{\Z}{\mathbbm{Z}}
\newcommand{\Q}{\mathbbm{Q}}
\newcommand{\dfgl}{\mathrel{\mathop:}=}
\newcommand{\Id}{{\rm Id}}
\newcommand{\ilim}{\varinjlim}
\newcommand{\sq}{\hskip2pt\raisebox{.225ex}{\rule{.8ex}{.8ex}\hskip2pt}}
\newcommand{\hm}[3]{{\rm Hom}_{#1}(#2,#3)}
\newcommand{\cechc}[4]{{}^{#1}H^{#2}(#3,#4)}
\newcommand{\grext}[5]{{}^{#1}{\rm Ext}_{#2}^{#3}(#4,#5)}
\newcommand{\grgam}[2]{{}^{#1}\Gamma_{#2}}
\newcommand{\grhm}[4]{{}^{#1}\hm{#2}{#3}{#4}}
\newcommand{\gridt}[3]{{}^{#1}D^{#2}_{#3}}
\newcommand{\grloc}[3]{{}^{#1}H^{#2}_{#3}}
\newcommand{\loccit}{{\it loc.\,cit.}}
\renewcommand{\aa}{{\bf a}}
\newcommand{\snf}{\renewcommand{\thefootnote}{*}\footnotetext{The author was supported by the Swiss National Science Foundation.}}
\begin{document}

\title{Coarsening of graded local cohomology\protect\snf}
\author{Fred Rohrer}
\address{Universit\"at T\"ubingen, Fachbereich Mathematik, Auf der Morgenstelle 10, 72076 T\"u\-bingen, Germany}
\email{fredrohrer0@gmail.com}
\subjclass[2010]{Primary 13D45; Secondary 13A02}
\keywords{Coarsening, graded local cohomology}

\begin{abstract}
Some criteria for graded local cohomology to commute with coarsening functors are proven, and an example is given where graded local cohomology does not commute with coarsening.
\end{abstract}

\maketitle


Let $G$ be a commutative group, and let $R$ be a $G$-graded commutative ring. The category $\grmod^G(R)$ of $G$-graded $R$-modules is Abelian, fulfils Grothendieck's axiom AB5, and has a projective generator, hence it has enough projectives and injectives. The $G$-graded Hom bifunctor $\grhm{G}{R}{\bullet}{\sq}$ maps two $G$-graded $R$-modules $M$ and $N$ onto the $G$-graded $R$-module $$\grhm{G}{R}{M}{N}=\bigoplus_{g\in G}\hm{\grmod^G(R)}{M}{N(g)}.$$ This contra-covariant bifunctor is left exact in both arguments, and it turns into an exact functor when a projective or injective $G$-graded $R$-module is plugged into its first or second argument, respectively. Thus, by general nonsense we get the $G$-graded Ext bifunctors, i.e., for every $i\in\Z$ a contra-covariant bifunctor $\grext{G}{R}{i}{\bullet}{\sq}$ such that $(\grext{G}{R}{i}{M}{\bullet})_{i\in\Z}$ and $(\grext{G}{R}{i}{\bullet}{M})_{i\in\Z}$ are the right derived cohomological functors of $\grhm{G}{R}{M}{\bullet}$ and $\grhm{G}{R}{\bullet}{M}$, respectively, for every $G$-graded $R$-module $M$. Furthermore, if $F$ is a projective system in $\grmod^G(R)$ over a right filtering ordered set $J$, then composition yields for every $i\in\Z$ a functor $\ilim_J\grext{G}{R}{i}{F}{\bullet}$, and it follows by general nonsense that $(\ilim_J\grext{G}{R}{i}{F}{\bullet})_{i\in\Z}$ is the right derived cohomological functor of $\ilim_J\grhm{G}{R}{F}{\bullet}$. In particular, given a graded ideal $\mathfrak{a}\subseteq R$ we can consider the projective system $(R/\mathfrak{a}^n)_{n\in\N}$ over $\N$ (where the morphisms are the canonical ones), and then the above construction yields the $G$-graded local cohomology functors $\grloc{G}{i}{\mathfrak{a}}$, i.e., the right derived cohomological functor of the $G$-graded $\mathfrak{a}$-torsion functor $\grgam{G}{\mathfrak{a}}$.

The above evokes the natural question: Do we get the same when we first apply graded local cohomology and then forget the graduation on the resulting module, and when we first forget the graduation on a given module and then apply ungraded local cohomology? The main reason for this to be nontrivial is that the functor that forgets the graduation does not necessarily preserve injectivity of objects -- see \cite[A.I.2.6]{no1} for a counterexample.

In case $G=\Z$ and $R$ is Noetherian, the above question is extensively discussed and positively answered in \cite[Chapter 12]{bs}\footnote{In fact, in \loccit\,\! it is supposed that the ungraded ring underlying $R$ is Noetherian, but in this special situation this is by \cite[A.II.3.5]{no1} equivalent to $R$ being Noetherian (as a graded ring, i.e., every increasing sequence of graded ideals is stationary).}. Here we study this question in case $G$ and $R$ are arbitrary, and we present some criteria for a positive answer. The importance of this stems mainly from toric geometry, where arbitrary groups of degrees (of finite type) are ubiquitous (see \cite{cox}). Since it seems also useful and enlightening, we consider a more general situation. Namely, we replace the functor that forgets the graduation by coarsening functors, of which we remind the reader now.

Let $\psi\colon G\twoheadrightarrow H$ be an epimorphism of groups. We define an $H$-graded ring $R_{[\psi]}=\bigoplus_{h\in H}(\bigoplus_{g\in\psi^{-1}(h)}R_g)$, and analogously, we get a functor $$\bullet_{[\psi]}\colon\grmod^G(R)\rightarrow\grmod^H(R_{[\psi]}),$$ called the $\psi$-coarsening functor. This functor is faithful, conservative, exact, has a right adjoint, and thus commutes with inductive limits and with finite projective limits. Our question is now whether graded local cohomology commutes with coarsening, i.e., whether the diagram of categories $$\xymatrix@C50pt@R20pt{\grmod^G(R)\ar[r]^{\grloc{G}{i}{\mathfrak{a}}}\ar[d]_{\bullet_{[\psi]}}&\grmod^G(R)\ar[d]^{\bullet_{[\psi]}}\\\grmod^H(R_{[\psi]})\ar[r]^{\grloc{H}{i}{\mathfrak{a}_{[\psi]}}}&\grmod^H(R_{[\psi]})}$$ quasicommutes.

Our first approach uses the above definition of graded local cohomology and hence comes down to the analogous question for graded Ext functors. Known facts about the analogous question for graded Hom functors (see \cite{gpn}, \cite{gpmn}) together with $\delta$-functor techniques will allow us to quickly get our first two criteria (\ref{40}) and (\ref{50}). A second approach uses the relation between local cohomology and \v{C}ech cohomology and will result in a third criterion (\ref{60}).

\smallskip

We start by defining some canonical morphisms of functors.

\begin{no}\label{10}
For $G$-graded $R$-modules $M$ and $N$ and $g\in G$, we have a mono\-morphism of groups $$\hm{\grmod^G(R)}{M}{N(g)}\rightarrowtail\hm{\grmod^H(R_{[\psi]})}{M_{[\psi]}}{N_{[\psi]}(\psi(g))}$$ with $u\mapsto u_{[\psi]}$, inducing a monomorphism of $H$-graded $R_{[\psi]}$-modules $$h_{\psi}(M,N)\colon\grhm{G}{R}{M}{N}_{[\psi]}\rightarrowtail\grhm{H}{R_{[\psi]}}{M_{[\psi]}}{N_{[\psi]}}.$$ Varying $M$ and $N$, we get a monomorphism of bifunctors $$h_{\psi}\colon\grhm{G}{R}{\bullet}{\sq}_{[\psi]}\rightarrowtail\grhm{H}{R_{[\psi]}}{\bullet_{[\psi]}}{\sq_{[\psi]}}.$$

If $F$ is a projective system in $\grmod^G(R)$ over a right filtering ordered set $J$, then $h_{\psi}$ induces a monomorphism of functors $$h_{F,\psi}\dfgl\ilim_Jh_{\psi}(F,\bullet)\colon\ilim_J\grhm{G}{R}{F}{\bullet}_{[\psi]}\rightarrowtail\ilim_J\grhm{H}{R_{[\psi]}}{F_{[\psi]}}{\bullet_{[\psi]}}.$$
\end{no}

One may note that the above monomorphisms are not necessarily isomorphisms. An easy example of this phenomenon is obtained by considering $G$ infinite, $H=0$ and $R\neq 0$ trivially $G$-graded (i.e., $R_{[0]}=R_0$), for then $h_0(\bigoplus_{g\in G}R(g),R)$ coincides with the canonical injection $R_0^{\oplus G}\rightarrowtail R_0^G$. In fact, it follows from \cite[A.I.2.11]{no1} and \cite[3.4]{gpmn} that if $M$ is a $G$-graded $R$-module, then $h_0(M,\bullet)$ is an isomorphism if and only if $G$ is finite or $M$ is small\footnote{A $G$-graded $R$-module $M$ is called small if $\hm{\grmod^G(R)}{M}{\bullet}$ commutes with direct sums.}.

\begin{no}\label{20}
If $M$ is a $G$-graded $R$-module, then exactness of $\bullet_{[\psi]}$ yields exact $\delta$-functors $$(\grext{G}{R}{i}{M}{\bullet}_{[\psi]})_{i\in\Z},\quad(\grext{H}{R_{[\psi]}}{i}{M_{[\psi]}}{\bullet_{[\psi]}})_{i\in\Z},$$$$(\grext{G}{R}{i}{\bullet}{M}_{[\psi]})_{i\in\Z},\quad(\grext{H}{R_{[\psi]}}{i}{\bullet_{[\psi]}}{M_{[\psi]}})_{i\in\Z},$$ and implies effaceability of $\grext{G}{R}{i}{M}{\bullet}_{[\psi]}$ and $\grext{G}{R}{i}{\bullet}{M}_{[\psi]}$ for $i>0$. But $\grext{H}{R_{[\psi]}}{i}{\bullet_{[\psi]}}{M_{[\psi]}}$ is effaceable for $i>0$, too, for $\bullet_{[\psi]}$ preserves projectivity (\cite[A.I.2.2]{no1}). Thus, $$(\grext{G}{R}{i}{M}{\bullet}_{[\psi]})_{i\in\Z},\quad(\grext{G}{R}{i}{\bullet}{M}_{[\psi]})_{i\in\Z},\quad(\grext{H}{R_{[\psi]}}{i}{\bullet_{[\psi]}}{M_{[\psi]}})_{i\in\Z}$$ are the right derived cohomological functors of $\grhm{G}{R}{M}{\bullet}_{[\psi]}$,\linebreak $\grhm{G}{R}{\bullet}{M}_{[\psi]}$, and $\grhm{H}{R_{[\psi]}}{\bullet_{[\psi]}}{M_{[\psi]}}$, respectively. So, there are unique morphisms of $\delta$-functors $$(h_{\psi}^i(M,\bullet))_{i\in\Z}\colon(\grext{G}{R}{i}{M}{\bullet}_{[\psi]})_{i\in\Z}\rightarrow (\grext{H}{R_{[\psi]}}{i}{M_{[\psi]}}{\bullet_{[\psi]}})_{i\in\Z}$$ and $$(h_{\psi}^i(\bullet,M))_{i\in\Z}\colon(\grext{G}{R}{i}{\bullet}{M}_{[\psi]})_{i\in\Z}\rightarrow(\grext{H}{R_{[\psi]}}{i}{\bullet_{[\psi]}}{M_{[\psi]}})_{i\in\Z}$$ with $h_{\psi}^0(M,\bullet)=h_{\psi}(M,\bullet)$ and $h_{\psi}^0(\bullet,M)=h_{\psi}(\bullet,M)$, and they define for every $i\in\Z$ a morphism of bifunctors $$h_{\psi}^i\colon\grext{G}{R}{i}{\bullet}{\sq}_{[\psi]}\rightarrow\grext{H}{R_{[\psi]}}{i}{\bullet_{[\psi]}}{\sq_{[\psi]}}$$ with $h_{\psi}^0=h_{\psi}$ (\cite[2.3]{t}). Furthermore, if $h_{\psi}(\bullet,M)$ is an isomorphism then so is $(h_{\psi}^i(\bullet,M))_{i\in\Z}$, and thus if $h_{\psi}$ is an isomorphism then so is $h_{\psi}^i$ for every $i\in\Z$.

If $F$ is a projective system in $\grmod^G(R)$ over a right filtering ordered set $J$, then exactness of $\bullet_{[\psi]}$ yields exact $\delta$-functors $$(\ilim_J\grext{G}{R}{i}{F}{\bullet}_{[\psi]})_{i\in\Z},\quad(\ilim_J\grext{H}{R_{[\psi]}}{i}{F_{[\psi]}}{\bullet_{[\psi]}})_{i\in\Z},$$ of which the first is universal, hence a unique morphism of $\delta$-functors $$(h_{F,\psi}^i)_{i\in\Z}\colon(\ilim_J\grext{G}{R}{i}{F}{\bullet}_{[\psi]})_{i\in\Z}\rightarrow(\ilim_J\grext{H}{R_{[\psi]}}{i}{F_{[\psi]}}{\bullet_{[\psi]}})_{i\in\Z}$$ with $h_{F,\psi}^0=h_{F,\psi}$.
\end{no}

\begin{no}\label{30}
Let $\mathfrak{a}\subseteq R$ be a graded ideal. We consider the projective system $R/\mathfrak{A}=(R/\mathfrak{a}^n)_{n\in\N}$ in $\grmod^G(R)$ over $\N$. There is by (\ref{20}) a canonical morphism of $\delta$-functors $$(h_{R/\mathfrak{A},\psi}^i)_{i\in\Z}\colon(\grloc{G}{i}{\mathfrak{a}}(\bullet)_{[\psi]})_{i\in\Z}\rightarrow(\grloc{H}{i}{\mathfrak{a}_{[\psi]}}(\bullet_{[\psi]}))_{i\in\Z},$$ and $h_{R/\mathfrak{A},\psi}\colon\grgam{G}{\mathfrak{a}}(\bullet)_{[\psi]}\rightarrowtail\grgam{H}{\mathfrak{a}_{[\psi]}}(\bullet_{[\psi]})$ is the identity morphism.
\end{no}

Related to graded local cohomology are graded higher ideal transformations: The $\delta$-functor $(\gridt{G}{i}{\mathfrak{a}}(\bullet))_{i\in\Z}\dfgl(\ilim_{n\in\N}\grext{G}{R}{i}{\mathfrak{a}^n}{\bullet})_{i\in\Z}$ is the right derived cohomological functor of $\ilim_{n\in\N}\grhm{G}{R}{\mathfrak{a}^n}{\bullet}$. It plays an important role in the relation between local cohomology and sheaf cohomology, on projective schemes in case $G=\Z$ (\cite[20.4.4]{bs}), and on toric schemes in case of more general groups of degree. Setting $\mathfrak{A}=(\mathfrak{a}^n)_{n\in\N}$ we get from (\ref{20}) a morphism of $\delta$-functors $$(h_{\mathfrak{A},\psi}^i)_{i\in\Z}\colon(\gridt{G}{i}{\mathfrak{a}}(\bullet)_{[\psi]})_{i\in\Z}\rightarrow(\gridt{H}{i}{\mathfrak{a}_{[\psi]}}(\bullet_{[\psi]}))_{i\in\Z}.$$

We show now that local cohomology commutes with coarsening if and only if higher ideal transformation does so.

\begin{prop}\label{70}
The morphism of $\delta$-functors $$(h_{\mathfrak{A},\psi}^i)_{i\in\Z}\colon(\gridt{G}{i}{\mathfrak{a}}(\bullet)_{[\psi]})_{i\in\Z}\rightarrow(\gridt{H}{i}{\mathfrak{a}_{[\psi]}}(\bullet_{[\psi]}))_{i\in\Z}$$ is an isomorphism if and only if the morphism of $\delta$-functors $$(h_{R/\mathfrak{A},\psi}^i)_{i\in\Z}\colon(\grloc{G}{i}{\mathfrak{a}}(\bullet)_{[\psi]})_{i\in\Z}\rightarrow(\grloc{H}{i}{\mathfrak{a}_{[\psi]}}(\bullet_{[\psi]}))_{i\in\Z}$$ is so.
\end{prop}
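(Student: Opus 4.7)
The plan is to build the familiar four-term exact sequence connecting $\grgam{G}{\mathfrak{a}}$, $\gridt{G}{0}{\mathfrak{a}}$ and $\grloc{G}{1}{\mathfrak{a}}$, together with isomorphisms $\gridt{G}{i}{\mathfrak{a}}\cong\grloc{G}{i+1}{\mathfrak{a}}$ for $i\geq 1$, and then to observe that its coarsening fits into a commutative ladder with the $H$-graded analogue whose vertical arrows are the morphisms $h_{R/\mathfrak{A},\psi}^i$ and $h_{\mathfrak{A},\psi}^i$ from (\ref{20}). The stated equivalence will then drop out of the five-lemma.

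Concretely, I would apply $\grext{G}{R}{\bullet}{\sq}{M}$ to the natural (in $n$) short exact sequence $0\to\mathfrak{a}^n\to R\to R/\mathfrak{a}^n\to 0$ in $\grmod^G(R)$ to obtain, for every $G$-graded $R$-module $M$, a long exact sequence of projective systems over $\N$ whose $R$-terms vanish in positive degree (since $R$ is projective) and reduce to $M$ in degree zero. Passing to the exact filtered direct limit $\ilim_n$ yields
$$0\to\grgam{G}{\mathfrak{a}}(M)\to M\to\gridt{G}{0}{\mathfrak{a}}(M)\to\grloc{G}{1}{\mathfrak{a}}(M)\to 0$$
together with isomorphisms $\gridt{G}{i}{\mathfrak{a}}(M)\cong\grloc{G}{i+1}{\mathfrak{a}}(M)$ for $i\geq 1$, all natural in $M$. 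Applying the exact, $\ilim$-commuting coarsening functor $\bullet_{[\psi]}$, and running the parallel argument on the $H$-graded side for the ideal $\mathfrak{a}_{[\psi]}\subseteq R_{[\psi]}$ (noting that $(\mathfrak{a}^n)_{[\psi]}=(\mathfrak{a}_{[\psi]})^n$ as graded ideals) produces two analogous exact sequences.

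I would then argue that these two sequences are joined by a commutative ladder whose vertical arrows are $h_{R/\mathfrak{A},\psi}^i$ and $h_{\mathfrak{A},\psi}^i$, interspersed with the identity on $M_{[\psi]}$ and with the equality $\grgam{G}{\mathfrak{a}}(M)_{[\psi]}=\grgam{H}{\mathfrak{a}_{[\psi]}}(M_{[\psi]})$ from (\ref{30}). Commutativity is built in: $h_{\psi}^i$ is a morphism of $\delta$-functors in the first Ext argument and hence commutes with the connecting maps attached to $0\to\mathfrak{a}^n\to R\to R/\mathfrak{a}^n\to 0$, and this is preserved under $\bullet_{[\psi]}$ and $\ilim_n$. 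With the ladder in hand, the conclusion is mechanical. The five-lemma applied to its four-term portion (after splitting through the image of $M\to\gridt{G}{0}{\mathfrak{a}}(M)$) gives that $h_{\mathfrak{A},\psi}^0$ is an isomorphism iff $h_{R/\mathfrak{A},\psi}^1$ is; compatibility of the higher-degree isomorphisms $\gridt{G}{i}{\mathfrak{a}}\cong\grloc{G}{i+1}{\mathfrak{a}}$ with $h^i$ on both sides gives, for $i\geq 1$, that $h_{\mathfrak{A},\psi}^i$ is iso iff $h_{R/\mathfrak{A},\psi}^{i+1}$ is; and $h_{R/\mathfrak{A},\psi}^0$ is the identity by (\ref{30}), while both families vanish in negative degree. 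So the two families are isomorphisms simultaneously. The one non-routine step, and the main obstacle I expect, is confirming that the vertical arrows of the ladder coincide \emph{on the nose} with the morphisms $h_{R/\mathfrak{A},\psi}^i$ and $h_{\mathfrak{A},\psi}^i$ of (\ref{20}) rather than with some a priori different natural transformations; this is a bookkeeping exercise handled by the universality of the ambient $\delta$-functors, i.e.\ the very uniqueness clause invoked in (\ref{20}).
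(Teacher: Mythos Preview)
Your proposal is correct and follows essentially the same approach as the paper: build the four-term exact sequence relating $\grgam{G}{\mathfrak{a}}$, $\gridt{G}{0}{\mathfrak{a}}$, $\grloc{G}{1}{\mathfrak{a}}$ together with the isomorphisms $\gridt{G}{i}{\mathfrak{a}}\cong\grloc{G}{i+1}{\mathfrak{a}}$ for $i\geq 1$, and conclude via the Five Lemma. You supply more detail than the paper (the derivation from $0\to\mathfrak{a}^n\to R\to R/\mathfrak{a}^n\to 0$, the identity $(\mathfrak{a}^n)_{[\psi]}=(\mathfrak{a}_{[\psi]})^n$, and the universality check that the ladder's verticals really are $h_{R/\mathfrak{A},\psi}^i$ and $h_{\mathfrak{A},\psi}^i$), but the architecture is identical.
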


\begin{proof}
Analogously to \cite[2.2.4]{bs} we get an exact sequence of functors $$0\rightarrow\grgam{G}{\mathfrak{a}}\rightarrow\Id_{\grmod^G(R)}\rightarrow\gridt{G}{0}{\mathfrak{a}}\rightarrow\grloc{G}{1}{\mathfrak{a}}\rightarrow 0$$ and a morphism of $\delta$-functors $(\zeta^i_{\mathfrak{a}})_{i\in\Z}\colon(\gridt{G}{i}{\mathfrak{a}})_{i\in\Z}\rightarrow(\grloc{G}{i+1}{\mathfrak{a}})_{i\in\Z}$ such that $\zeta_{\mathfrak{a}}^i$ is an isomorphism for $i>0$. Now the Five Lemma yields the claim.
\end{proof}

\begin{prop}\label{40}
If\/ $\ke(\psi)$ is finite then $$(h_{R/\mathfrak{A},\psi}^i)_{i\in\Z}\colon(\grloc{G}{i}{\mathfrak{a}}(\bullet)_{[\psi]})_{i\in\Z}\rightarrow(\grloc{H}{i}{\mathfrak{a}_{[\psi]}}(\bullet_{[\psi]}))_{i\in\Z}$$ is an isomorphism.
\end{prop}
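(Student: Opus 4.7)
The plan is to reduce the statement to the claim that $h_\psi$ from (\ref{10}) is an isomorphism whenever $\ke(\psi)$ is finite, and then to propagate this through the constructions of (\ref{20}) and (\ref{30}) to the inductive limit defining local cohomology.

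First I would verify that under the hypothesis $\ke(\psi)$ finite, $h_\psi(M,N)$ is an isomorphism for every pair $M,N$ of $G$-graded $R$-modules. In a fixed $H$-degree $h\in H$ the source of $h_\psi(M,N)$ is the \emph{finite} direct sum $\bigoplus_{g\in\psi^{-1}(h)}\hm{\grmod^G(R)}{M}{N(g)}$. An $H$-graded morphism $\phi\colon M_{[\psi]}\to N_{[\psi]}(h)$ sends a homogeneous element of $M_{g'}$ into $(N_{[\psi]}(h))_{\psi(g')}=\bigoplus_{g\in\psi^{-1}(h)}N_{g+g'}$, and projecting $\phi|_{M_{g'}}$ onto the $g$-th summand, as $g'$ varies, assembles for each $g\in\psi^{-1}(h)$ a $G$-graded morphism $u_g\colon M\to N(g)$. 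Finiteness of $\psi^{-1}(h)$ ensures that $(u_g)_g$ has finite support and therefore lies in the direct sum above, providing a preimage of $\phi$; injectivity was already established in (\ref{10}). This degree-by-degree check is essentially the criterion of \cite{gpn}, \cite{gpmn}, and it is the main obstacle of the argument.

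Once $h_\psi$ is an isomorphism of bifunctors, (\ref{20}) immediately yields that $h_\psi^i$ is an isomorphism of bifunctors for every $i\in\Z$. In particular, for each $n\in\N$ the morphism $h_\psi^i(R/\mathfrak{a}^n,\bullet)$ between $\grext{G}{R}{i}{R/\mathfrak{a}^n}{\bullet}_{[\psi]}$ and $\grext{H}{R_{[\psi]}}{i}{(R/\mathfrak{a}^n)_{[\psi]}}{\bullet_{[\psi]}}$ is an isomorphism, using that $\bullet_{[\psi]}$ is exact and that $(R/\mathfrak{a}^n)_{[\psi]}=R_{[\psi]}/\mathfrak{a}_{[\psi]}^n$.

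Finally, I would identify $(h_{R/\mathfrak{A},\psi}^i)_{i\in\Z}$ with $(\ilim_n h_\psi^i(R/\mathfrak{a}^n,\bullet))_{i\in\Z}$ by appealing to the uniqueness part of (\ref{20}): exactness of the inductive limit makes the latter a morphism of $\delta$-functors from the universal $(\ilim_n\grext{G}{R}{i}{R/\mathfrak{a}^n}{\bullet}_{[\psi]})_{i\in\Z}$ extending $h_{R/\mathfrak{A},\psi}$ in degree zero, so the two must coincide. Since inductive limits preserve isomorphisms, the conclusion follows.
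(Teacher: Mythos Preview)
Your proof is correct and follows essentially the same route as the paper: reduce to showing that $h_\psi$ is an isomorphism of bifunctors, and then invoke (\ref{20}) to propagate this to the Ext's and to the inductive limit defining local cohomology. Your verification of surjectivity of $h_\psi(M,N)$ is a direct fibrewise decomposition of an $H$-homogeneous morphism into its $G$-components over the finite fibre $\psi^{-1}(h)$, whereas the paper appeals to the characterisation \cite[A.I.2.10]{no1} of $\grhm{G}{R}{M}{N}$ via a single finite shift set $T\subseteq G$ and lifts the corresponding finite set $S\subseteq H$ through $\psi$; these are two phrasings of the same computation. You also spell out explicitly the identification $h_{R/\mathfrak{A},\psi}^i=\ilim_n h_\psi^i(R/\mathfrak{a}^n,\bullet)$ via the uniqueness clause in (\ref{20}), which the paper leaves implicit in ``By (\ref{20}) it suffices\dots''.
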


\begin{proof}
By (\ref{20}) it suffices to show that $$h_{\psi}\colon\grhm{G}{R}{\bullet}{\sq}_{[\psi]}\rightarrowtail\grhm{H}{R_{[\psi]}}{\bullet_{[\psi]}}{\sq_{[\psi]}}$$ is an epimorphism. Let $M$ and $N$ be $G$-graded $R$-modules, and let $f\in\grhm{H}{R_{[\psi]}}{M_{[\psi]}}{N_{[\psi]}}$. By \cite[A.I.2.10]{no1}, there is a finite subset $S\subseteq H$ with $f((M_{[\psi]})_h)\subseteq\sum_{l\in S}(N_{[\psi]})_{h+l}$ for $h\in H$. Finiteness of $\ke(\psi)$ implies that $T\dfgl\psi^{-1}(S)\subseteq G$ is finite. If $g\in G$ then $$\textstyle f(M_g)\subseteq f((M_{[\psi]})_{\psi(g)})\subseteq\sum_{l\in S}(N_{[\psi]})_{\psi(g)+l}=$$$$\textstyle\sum_{l\in T}(N_{[\psi]})_{\psi(g)+\psi(l)}=\sum_{l\in T}\sum_{k\in\psi^{-1}(\psi(g+l))}N_k=\sum_{k\in T}N_{g+k},$$ thus $f\in\grhm{G}{R}{M}{N}$, again by \cite[A.I.2.10]{no1}.
\end{proof}

The hypothesis of (\ref{40}) is fulfilled if $G$ is of finite type and $\psi$ is the projection onto $G$ modulo its torsion subgroup. In this sense, local cohomology does not care about torsion in the group of degrees.

\begin{prop}\label{50}
If\/ $\mathfrak{a}^n$ has a projective resolution with components of finite type for every $n\in\N$, then $$(h_{R/\mathfrak{A},\psi}^i)_{i\in\Z}\colon(\grloc{G}{i}{\mathfrak{a}}(\bullet)_{[\psi]})_{i\in\Z}\rightarrow(\grloc{H}{i}{\mathfrak{a}_{[\psi]}}(\bullet_{[\psi]}))_{i\in\Z}$$ is an isomorphism.
\end{prop}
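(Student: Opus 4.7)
The plan is to reduce to a pointwise statement via inductive limits and then compute both Ext modules from a common projective resolution. Since $(\bullet)_{[\psi]}$ is exact and commutes with inductive limits, $\grloc{G}{i}{\mathfrak{a}}(\bullet)_{[\psi]}\cong\ilim_n\grext{G}{R}{i}{R/\mathfrak{a}^n}{\bullet}_{[\psi]}$ naturally (and likewise on the $H$-graded side), so it suffices to show that $h_{\psi}^i(R/\mathfrak{a}^n,\bullet)$ is an isomorphism for every fixed $n$ and $i$; filtered inductive limits preserve isomorphisms.

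For fixed $n$, I would take the projective resolution $P_\bullet\to\mathfrak{a}^n$ with finite-type components given by the hypothesis and splice with $0\to\mathfrak{a}^n\to R\to R/\mathfrak{a}^n\to 0$ to obtain a projective resolution $Q_\bullet\to R/\mathfrak{a}^n$ with $Q_0=R$ and $Q_{j+1}=P_j$, all of finite type. Since $(\bullet)_{[\psi]}$ is exact and preserves projectivity, $(Q_\bullet)_{[\psi]}\to(R/\mathfrak{a}^n)_{[\psi]}$ is a projective resolution in $\grmod^H(R_{[\psi]})$, so $\grext{G}{R}{i}{R/\mathfrak{a}^n}{N}_{[\psi]}$ and $\grext{H}{R_{[\psi]}}{i}{(R/\mathfrak{a}^n)_{[\psi]}}{N_{[\psi]}}$ are the $i$-th cohomologies of $\grhm{G}{R}{Q_\bullet}{N}_{[\psi]}$ and $\grhm{H}{R_{[\psi]}}{(Q_\bullet)_{[\psi]}}{N_{[\psi]}}$, respectively.

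The crux is that $h_{\psi}(M,N)$ is an isomorphism whenever $M$ is of finite type: any $H$-graded morphism $f\colon M_{[\psi]}\to N_{[\psi]}(h)$ is determined by its values on finitely many homogeneous generators of $M$, each of which has only finite support in the corresponding degree component (a direct sum indexed by a fibre of $\psi$), so $f$ decomposes as a finite sum of $G$-graded morphisms and lies in the image of $h_{\psi}$. Applied termwise, $h_{\psi}(Q_\bullet,N)$ is an isomorphism of complexes, and its induced isomorphism $\phi^i(N)$ on $i$-th cohomology is natural in $N$ and compatible with the connecting morphisms of the long exact Ext sequences arising from short exact sequences in the second variable (those sequences are preserved by $(\bullet)_{[\psi]}$, and the $Q_j$ being projective makes the relevant $\grhm$ sequences exact). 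Thus $\phi^i$ defines a morphism of $\delta$-functors from $(\grext{G}{R}{i}{R/\mathfrak{a}^n}{\bullet}_{[\psi]})_{i\in\Z}$ to $(\grext{H}{R_{[\psi]}}{i}{(R/\mathfrak{a}^n)_{[\psi]}}{\bullet_{[\psi]}})_{i\in\Z}$ with $\phi^0=h_{\psi}(R/\mathfrak{a}^n,\bullet)$.

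It remains to identify $\phi^i$ with $h_{\psi}^i(R/\mathfrak{a}^n,\bullet)$, which is the main subtle point. By (\ref{20}), the source $(\grext{G}{R}{i}{R/\mathfrak{a}^n}{\bullet}_{[\psi]})_{i\in\Z}$ is the universal right derived $\delta$-functor of $\grhm{G}{R}{R/\mathfrak{a}^n}{\bullet}_{[\psi]}$, so the morphism of $\delta$-functors extending $h_{\psi}(R/\mathfrak{a}^n,\bullet)$ is unique, forcing $\phi^i=h_{\psi}^i(R/\mathfrak{a}^n,\bullet)$. Note that only universality of the source is needed, not of the target; this is fortunate, since the potential non-universality of the target is precisely the obstruction underlying the whole question of commutation with coarsening.
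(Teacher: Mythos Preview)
Your argument is correct and follows essentially the same route as the paper: reduce to the Ext comparison for each $R/\mathfrak{a}^n$, compute both sides from a single projective resolution with finite-type terms, and conclude via universality of the source $\delta$-functor. The only noteworthy difference is in the key lemma that $h_{\psi}(M,N)$ is an isomorphism for $M$ of finite type: you give a direct decomposition argument, whereas the paper factors through the case $H=0$ (citing \cite[A.I.2.11]{no1}) and then invokes conservativity of coarsening---your hands-on argument avoids the citation but is a bit sketchier on why the homogeneous components $f_{g'}$ are well-defined $R$-linear maps.
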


\begin{proof}
It suffices to show that $h_{\psi}^i(M,\bullet)$ is an isomorphism for every $i\in\Z$ and every $G$-graded $R$-module $M$ that has a projective resolution $P$ with components of finite type. As $h_{\psi}^i(M,\bullet)=H^i(h_{\psi}(P,\bullet))$ for $i\in\Z$, it suffices to show that $h_{\psi}(M,N)$ is an isomorphism for every $G$-graded $R$-module $M$ of finite type and every $G$-graded $R$-module $N$. In this situation, $M_{[\psi]}$ is of finite type, and hence $$h_{0}(M,N)\colon\grhm{G}{R}{M}{N}_{[0]}\rightarrowtail\grhm{0}{R_{[0]}}{M_{[0]}}{N_{[0]}}$$ and $$h_{0}(M_{[\psi]},N_{[\psi]})\colon\grhm{H}{R_{[\psi]}}{M_{[\psi]}}{N_{[\psi]}}_{[0]}\rightarrowtail\grhm{0}{R_{[0]}}{M_{[0]}}{N_{[0]}}$$ are isomorphisms (\cite[A.I.2.11]{no1}). It follows that $$(h_{\psi}(M,N))_{[0]}\colon(\grhm{G}{R}{M}{N}_{[\psi]})_{[0]}\rightarrowtail(\grhm{H}{R_{[\psi]}}{M_{[\psi]}}{N_{[\psi]}})_{[0]}$$ is an isomorphism, too, and since coarsening functors are conservative, this implies that $h_{\psi}(M,N)$ is an isomorphism as desired.\footnote{Using \cite[3.1; 3.4]{gpmn} one sees that this proof also applies if every power of $\mathfrak{a}$ has a projective resolution with small components. But since a projective $G$-graded $R$-module is small if and only if it is of finite type (\cite[II.1.2]{bass}), this yields no improvement.}
\end{proof}

The hypothesis of (\ref{50}) is fulfilled if $R$ is coherent and $\mathfrak{a}$ is of finite type, and thus in particular if $R$ is Noetherian.

\smallskip

Our third criterion makes use of the so-called ITI property. The $G$-graded ring $R$ is said to have ITI with respect to $\mathfrak{a}$ if the graded $\mathfrak{a}$-torsion submodule $\grgam{G}{\mathfrak{a}}(I)$ of an injective $G$-graded $R$-module $I$ is injective. This property, strictly weaker than graded Noetherianness, lies at the heart of many basic properties of local cohomology and thus is a very natural hypothesis. For more details and examples about ITI, we refer the reader to \cite{qr}.

\begin{prop}\label{60}
If\/ $\mathfrak{a}$ has a finite set $E$ of homogeneous generators such that $R$ and $R_{[\psi]}$ have ITI with respect to $\langle a\rangle$ for every $a\in E$, then $$(h_{R/\mathfrak{A},\psi}^i)_{i\in\Z}\colon(\grloc{G}{i}{\mathfrak{a}}(\bullet)_{[\psi]})_{i\in\Z}\rightarrow(\grloc{H}{i}{\mathfrak{a}_{[\psi]}}(\bullet_{[\psi]}))_{i\in\Z}$$ is an isomorphism.
\end{prop}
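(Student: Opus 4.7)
The plan is to exploit the \v{C}ech cohomological description of graded local cohomology. Under the ITI hypothesis on $R$ with respect to $\langle a\rangle$ for every $a\in E$, there is a natural isomorphism of $\delta$-functors between graded local cohomology $(\grloc{G}{i}{\mathfrak{a}}(\bullet))_{i\in\Z}$ and graded \v{C}ech cohomology $(\cechc{G}{i}{E}{\bullet})_{i\in\Z}$ with respect to $E$ (this is the main raison d'\^etre of ITI, cf.\ \cite{qr}); the analogous isomorphism on the $H$-graded side uses the ITI hypothesis on $R_{[\psi]}$ with respect to each $\langle a\rangle_{[\psi]}$, valid because $E$ remains a generating set of $\mathfrak{a}_{[\psi]}$.

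Next, I would verify that coarsening commutes with the formation of the graded \v{C}ech complex. For a homogeneous element $a\in R$ of degree $d$, the localization $M_a$ is the filtered inductive limit in $\grmod^G(R)$ of the directed system $M\rightarrow M(d)\rightarrow M(2d)\rightarrow\cdots$ of multiplications by $a$; since $\bullet_{[\psi]}$ commutes with inductive limits and sends a shift by $g$ to a shift by $\psi(g)$, one obtains a canonical isomorphism $(M_a)_{[\psi]}\cong (M_{[\psi]})_a$ in $\grmod^H(R_{[\psi]})$. As coarsening is additive and intertwines the canonical localization morphisms, the \v{C}ech complex of $M_{[\psi]}$ over $R_{[\psi]}$ with respect to $E$ coincides with the coarsening of the \v{C}ech complex of $M$ over $R$ with respect to $E$. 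Exactness of $\bullet_{[\psi]}$ then yields natural isomorphisms $\cechc{G}{i}{E}{M}_{[\psi]}\cong\cechc{H}{i}{E}{M_{[\psi]}}$ for every $i\in\Z$.

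Chaining the above isomorphisms produces, for every $i\in\Z$, a natural isomorphism $\grloc{G}{i}{\mathfrak{a}}(\bullet)_{[\psi]}\cong\grloc{H}{i}{\mathfrak{a}_{[\psi]}}(\bullet_{[\psi]})$. The key technical obstacle lies not in producing this isomorphism but in identifying it with the canonical morphism $(h_{R/\mathfrak{A},\psi}^i)_{i\in\Z}$ defined in (\ref{30}). I would handle this by first checking that in degree zero both morphisms reduce to the identity of $\grgam{G}{\mathfrak{a}}(\bullet)_{[\psi]}$ (for the \v{C}ech route this requires that the ITI--\v{C}ech comparison is the identity in degree zero, which follows from its construction via kernels of the canonical map into $\prod_{a\in E}M_a$). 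Then, as shown in (\ref{20}), $(\grloc{G}{i}{\mathfrak{a}}(\bullet)_{[\psi]})_{i\in\Z}$ is the right derived cohomological functor of $\grgam{G}{\mathfrak{a}}(\bullet)_{[\psi]}$; by universality, any two morphisms of $\delta$-functors extending the same morphism in degree zero must coincide, whence both arrows agree in all degrees, and the proposition follows.
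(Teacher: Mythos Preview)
Your argument is correct and follows essentially the same route as the paper: reduce to \v{C}ech cohomology via the ITI hypothesis (the paper phrases this as effaceability of $\cechc{G}{i}{\aa}{\bullet}$ for $i>0$, citing \cite[5.1.19]{bs}), observe that coarsening commutes with the \v{C}ech cocomplex, and conclude by universality of the source $\delta$-functor. Your treatment is in fact more explicit than the paper's on two points the latter leaves implicit---the inductive-limit description of $(M_a)_{[\psi]}\cong(M_{[\psi]})_a$ and the identification of the resulting isomorphism with $(h_{R/\mathfrak{A},\psi}^i)_{i\in\Z}$ via agreement in degree zero.
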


\begin{proof}
We choose a counting $\aa=(a_i)_{i=1}^n$ of $E$ and write $(\cechc{G}{i}{\aa}{\bullet})_{i\in\Z}$ for the $G$-graded \v{C}ech cohomology with respect to $\aa$. This exact $\delta$-functor is obtained by taking homology of the $G$-graded \v{C}ech cocomplex, and since that cocomplex is obtained by taking direct sums of modules of fractions with homogeneous denominators, it commutes with coarsening. As $\grloc{G}{0}{\mathfrak{a}}(\bullet)=\cechc{G}{0}{\aa}{\bullet}$ there is a unique morphism of $\delta$-functors $$(g_{\aa}^i)_{i\in\Z}\colon(\grloc{G}{i}{\mathfrak{a}}(\bullet))_{i\in\Z}\rightarrow(\cechc{G}{i}{\aa}{\bullet})_{i\in\Z}$$ with $g^0_{\aa}=\Id_{\grloc{G}{0}{\mathfrak{a}}}$. So, it suffices to show that $(g_{\aa}^i)_{i\in\Z}$ is an isomorphism if $R$ has ITI with respect to $\langle a_i\rangle$ for every $i\in[1,n]$. For this it suffices to show that $\cechc{G}{i}{\aa}{\bullet}$ is effaceable for $i>0$. But this can be shown analogously to the ungraded case (\cite[5.1.19]{bs})\footnote{In \loccit\,\! one makes use only of the ITI property, but not of Noetherianness.}.
\end{proof}

In \cite{sch}, Schenzel characterised ideals $\mathfrak{a}$ such that local cohomology with respect to $\mathfrak{a}$ is isomorphic to \v{C}ech cohomology with respect to a generating system of $\mathfrak{a}$. The above proof together with a graded analogue of Schenzel's result yields a further coarsening criterion.

\smallskip

We end this note with an example showing that graded local cohomology does not necessarily commute with coarsening.

\begin{exa}
Let $K$ be a field, let $R$ denote the algebra of the additive monoid $(\Q_{\geq 0},+)$ of positive rational numbers over $K$, furnished with its canonical $\Q$-graduation, and let $\{e_{\alpha}\mid\alpha\in\Q_{\geq 0}\}$ denote its canonical basis (as a $K$-vector space). We consider coarsening with respect to the zero morphism $0\colon\Q\rightarrow 0$. The graded ideal $\mathfrak{m}\dfgl\langle e_{\alpha}\mid\alpha>0\rangle_R$ of $R$ is idempotent, of countable type, but not of finite type, and thus it is not small by \cite[2$^\circ$]{ren}. Hence, by \cite[3.4]{gpmn} there exists a $\Q$-graded $R$-module $N$ such that the canonical monomorphism $$\grhm{\Q}{R}{\mathfrak{m}}{N}_{[0]}\rightarrowtail\grhm{0}{R_{[0]}}{\mathfrak{m}_{[0]}}{N_{[0]}}$$ is not an epimorphism. By idempotency of $\mathfrak{m}$ it equals the canonical morphism $\gridt{\Q}{0}{\mathfrak{m}}(N)_{[0]}\rightarrow\gridt{0}{0}{\mathfrak{m}_{[0]}}{N_{[0]}}$, and so the exact sequence $$0\rightarrow\grgam{\Q}{\mathfrak{m}}\rightarrow\Id_{\grmod^{\Q}(R)}\rightarrow\gridt{\Q}{0}{\mathfrak{m}}\rightarrow\grloc{\Q}{1}{\mathfrak{m}}\rightarrow 0$$ together with the Snake Lemma shows that the canonical morphism $\grloc{\Q}{1}{\mathfrak{m}}(N)_{[0]}\rightarrow\grloc{0}{1}{\mathfrak{m}_{[0]}}(N_{[0]})$ is a monomorphism, but not an epimorphism.
\end{exa}

\smallskip

\noindent\textbf{Acknowledgement.} I am grateful to Markus Brodmann for his encouraging support during the writing of this article. I also thank the referee for his careful reading and the suggested improvements.



\begin{thebibliography}{99}
\bibitem{bass} H. Bass. \emph{Algebraic K-theory.} W. A. Benjamin, New York, 1968.
\bibitem{bs} M. P. Brodmann, R. Y. Sharp. \emph{Local cohomology: an algebraic introduction with geometric applications.} Cambridge Stud. Adv. Math. 60, Cambridge University Press, Cambridge, 1998.
\bibitem{cox} D. A. Cox. \emph{The homogeneous coordinate ring of a toric variety.} J. Algebraic Geom. 4 (1995), 17--50.
\bibitem{gpn} J. L. G\'omez Pardo, C. N\v{a}st\v{a}sescu. \emph{Topological aspects of graded rings.} Comm. Algebra 21 (1993), 4481--4493.
\bibitem{gpmn} J. L. G\'omez Pardo, G. Militaru, C. N\v{a}st\v{a}sescu. \emph{When is ${\rm HOM}_R(M,-)$ equal to ${\rm Hom}_R(M,-)$ in the category $R$-$gr$?} Comm. Algebra 22 (1994), 3171--3181.
\bibitem{t} A. Grothendieck. \emph{Sur quelques points d'alg\`ebre homologique.} Tohoku Math. J. (2) 9 (1957), 119--221.
\bibitem{no1} C. N\v{a}st\v{a}sescu, F. van Oystaeyen. \emph{Graded ring theory.} North-Holland Math. Library 28, North-Holland Publishing Co., Amsterdam, 1982.
\bibitem{qr} P. H. Quy, F. Rohrer. \emph{Bad behaviour of injective modules.} Preprint (2011).
\bibitem{ren} R. Rentschler. \emph{Sur les modules $M$ tels que ${\rm Hom}(M,-)$ commute avec les sommes directes.} C. R. Acad. Sc. Paris 268 (1969), 930--933.
\bibitem{sch} P. Schenzel. \emph{Proregular sequences, local cohomology, and completion.} Math. Scand. 92 (2003), 161--180.
\end{thebibliography}
\end{document}